\documentclass[12pt]{amsart}
\usepackage{a4}
\usepackage{amssymb}

\makeatletter
\@addtoreset{equation}{section}
\makeatother

\usepackage{color}
\usepackage[mathscr]{eucal}
\usepackage{amsmath,amsthm,amssymb}
\usepackage{mathrsfs}
\usepackage{enumerate}
\usepackage{bm}
\usepackage{graphicx}
\usepackage{verbatim}
\usepackage{wrapfig}
\usepackage{ascmac}
\usepackage{multicol}

\usepackage{hyperref}
\newtheorem{Prop}{Proposition}[section]
\newtheorem{Thm}[Prop]{Theorem}

\newtheorem{conj}[Prop]{Conjecture}

\theoremstyle{definition}
\newtheorem{Def}[Prop]{Definition}
\newtheorem{Ex}[Prop]{Example}
\newtheorem{Rem}[Prop]{Remark}
\newtheorem{Problem}[Prop]{Problem}

\newcommand{\R}{{\mathbb R}}

\newcommand{\Z}{{\mathbb Z}}

\newcommand{\RP}{\R \mathrm{P}}

\newcommand{\Map}{\mathrm{Map}}
\newcommand{\id}{\mathrm{id}}

\newcommand{\Fix}{\mathrm{Fix}}

\newcommand{\Dis}{\mathrm{Dis}}

\newcommand{\rnum}[1]{\expandafter{\romannumeral #1}}

\title{Two-numbers and Euler characteristics for quandles} 

\author{Ryoya Kai}
\address[R.~Kai]{Nara University of Education, Nara City, Japan 630-8528} 
\email{kai.ryoya.d8@cc.nara-edu.ac.jp}

\author{Akira Kubo}
\address[A.~Kubo]{Faculty of Environmental Studies, Hiroshima Institute of Technology, Hiroshima City, Japan 731-5143}
\email{a.kubo.3r@cc.it-hiroshima.ac.jp}

\author{Hiroshi Tamaru}
\address[H.~Tamaru]{Department of Mathematics, Osaka Metropolitan University, Osaka City, Japan 558-8585} 
\email{tamaru@omu.ac.jp}

\thanks{
} 

\date{}

\makeatletter
\@namedef{subjclassname@2020}{\textup{2020} Mathematics Subject Classification}
\makeatother
\subjclass[2020]{57K12, 53C35} 
\keywords{Quandles, symmetric spaces, two-numbers, Euler characteristics} 

\begin{document} 

\maketitle 

\begin{abstract} 
Quandles can be regarded as generalizations of symmetric spaces. 
In the theory of symmetric spaces developed by Chen and Nagano, 
there is an interesting relationship between the two-number and the Euler characteristic. 
The two-number is a Riemannian geometric invariant that can also be characterized in terms of point symmetries, 
whereas the Euler characteristic is a topological invariant. 
The aim of this paper is to initiate the study of quandle analogues of Chen--Nagano theory. 
In particular, 
we investigate relationships between the two-numbers and the Euler characteristics of quandles, 
and provide examples of finite quandles that either satisfy or fail to satisfy properties analogous 
to those of symmetric spaces. 
These examples are constructed from directed simple graphs labeled by abelian groups. 
\end{abstract}

\section{Introduction}
\label{sec:1} 

The notion of a quandle originated in knot theory \cite{Joyce, Matveev}, 
and now plays important roles in many branches of mathematics. 
Research on quandles can be broadly divided into two directions: 
one is to investigate the structures of quandles from various viewpoints, 
and the other is to apply results from quandle theory to other fields. 
A typical example is the interaction between quandle theory and knot theory, 
although such interactions are no longer limited to knot theory. 
The aim of our study is to develop a similar interaction between quandle theory and the theory of symmetric spaces 
(\cite{Tamaru, IT}). 
We therefore adopt notation analogous to that used for symmetric spaces, as follows. 

\begin{Def}
Let $X$ be a nonempty set, and denote by $\Map(X,X)$ the set of all maps from $X$ to itself. 
A pair $(X,s)$, where
\[
s \colon X \rightarrow \Map(X,X), \quad x \mapsto s_x , 
\]
is called a \textit{quandle} if the following conditions hold:
\begin{itemize}
\item[(Q1)]
$s_x(x)=x$ for every $x\in X$;
\item[(Q2)]
$s_x$ is bijective for every $x\in X$;
\item[(Q3)]
$s_x\circ s_y=s_{s_x(y)}\circ s_x$ for every $x,y\in X$.
\end{itemize}
\end{Def}

Note that a symmetric space carries a quandle structure induced by its point symmetries 
(cf.~\cite{Helgason, Loos}). 
For a quandle $(X,s)$, the map 
$s_x \colon X \to X$ 
is also called the \textit{point symmetry} at $x \in X$, although it need not be an involution.

%%%

In the theory of symmetric spaces, 
Chen and Nagano developed a remarkable theory. 
For a symmetric space $(X,s)$, 
an invariant called the \textit{two-number}, denoted by $\#_2(X,s)$,
was defined in terms of Riemannian geometric data, namely, closed geodesics. 
Its definition can also be reformulated solely in terms of point symmetries. 
Remarkably, this invariant is closely related to the topology of the symmetric space.
In particular, for every compact connected Riemannian symmetric space $(X,s)$,
the following inequality involving the topological Euler characteristic holds:
\[
\#_2(X,s) \geq \chi^{\mathrm{top}}(X),
\]
which we refer to as the \textit{Chen--Nagano inequality}.
It is also conjectured that, for every compact connected Riemannian symmetric space,
the two-number and the Euler characteristic have the same parity.
Further details of the Chen--Nagano theory will be presented in Section~\ref{sec2}.

%%%

Our aim is to establish a quandle analogue of the Chen--Nagano theory.
As mentioned above,
the two-number of a symmetric space can be characterized in terms of its point symmetries.
This characterization allows us to define the corresponding notion for quandles \cite{KNOT}.
Moreover, the Euler characteristic of a compact connected Riemannian symmetric space
can also be formulated in terms of its point symmetries,
and this formulation leads to an analogous invariant for quandles \cite{KT}.
It is therefore natural to consider the following properties.

\begin{Def}
\label{def:0822}
Let $(X,s)$ be a quandle,
and denote its two-number by $\#_2(X,s)$
and its quandle Euler characteristic by $\chi^{\mathrm{qdl}}(X,s)$.
We say that $(X,s)$ satisfies
\begin{enumerate}
\item
the \textit{Chen--Nagano-type inequality} if
\[
\#_2(X,s) \geq \chi^{\mathrm{qdl}}(X,s);
\]
\item
the \textit{parity agreement} if
\[
\#_2(X,s) \equiv \chi^{\mathrm{qdl}}(X,s) \pmod{2}.
\]
\end{enumerate}
\end{Def}

In this paper,
we construct examples of finite quandles from directed simple graphs labeled by abelian groups. 
We compute the two-numbers and the quandle Euler characteristics of some of these quandles,
including those for which the underlying graph is the Johnson graph $J(5,2)$.
This particular case may also be of independent interest.
Our examples demonstrate that neither the Chen--Nagano-type inequality
nor the parity agreement holds for quandles in general.
It would therefore be interesting to identify conditions under which a quandle satisfies these properties.
This problem may be viewed as part of an effort to identify a natural subclass of quandles
that shares certain properties with symmetric spaces.

\section{Two-numbers and the Chen--Nagano theory}
\label{sec2}

In this section,
we briefly review the Chen--Nagano theory of compact symmetric spaces,
with particular emphasis on two-numbers.
The notion of the two-number was introduced by Chen and Nagano
for symmetric spaces \cite{CN}.
Since its characterization in terms of point symmetries extends naturally to quandles,
we begin by defining the two-number of a quandle.

\begin{Def}
Let $(X,s)$ be a quandle.
A subset $A\subset X$ is said to be \textit{antipodal} if $s_a(b)=b$ holds for every $a,b\in A$. 
The \textit{two-number} of $(X,s)$ is defined by 
\[
\#_2(X,s)
:=
\sup\bigl\{
\#A
\bigm|
A\subset X \text{ is antipodal}
\bigr\}.
\]
\end{Def}

From the viewpoint of quandle theory,
an antipodal subset is precisely a subquandle whose induced quandle structure is trivial.
Here, a quandle is said to be \textit{trivial}
if all of its point symmetries are the identity map.
The term ``antipodal subset'' originates from the usual notion of antipodal points on a sphere,
as illustrated by the following examples.

\begin{Ex}
\label{ex:0822}
We have the following: 
\begin{enumerate}
\item
For the standard sphere $S^n$, 
every maximal antipodal subset is of the form $\{ p , -p \}$ for some $p\in S^n$. 
Consequently, 
\[
\#_2(S^n)=2.
\]
\item
For the real projective space $\RP^n$, every maximal antipodal subset is congruent to 
$\{ [e_1], \ldots , [e_{n+1}] \}$, 
where $\{e_1,\ldots,e_{n+1}\}$ is an orthonormal basis of $\R^{n+1}$. 
Consequently,
\[
\#_2(\RP^n)=n+1.
\]
\end{enumerate}
\end{Ex}

\begin{proof}
These assertions are well known, 
but we include a brief proof for the convenience of the reader. 
Recall that the point symmetry of the standard sphere $S^n$ at $x\in S^n$ 
is the restriction to $S^n$ of the orthogonal linear transformation 
\[
s_x^{S^n}(y)
=
2\langle x,y\rangle x-y.
\]
This transformation acts as the identity on the line $\R x$ 
and as $-\id$ on its orthogonal complement $(\R x)^\perp$. 
Its fixed-point set in $S^n$ is therefore $\{x,-x\}$. 
It follows that every maximal antipodal subset of $S^n$ is of the form $\{x,-x\}$, 
which proves the first assertion. 

For the real projective space $\RP^n$,
the point symmetry is induced by the corresponding point symmetry of $S^n$:
\[
s_{[x]}^{\RP^n}([y])
=
\bigl[s_x^{S^n}(y)\bigr].
\]
Hence $s_{[x]}^{\RP^n}$ fixes $[y]$ if and only if
$y$ is proportional to $x$ or orthogonal to $x$.
Thus, an antipodal subset of $\RP^n$ is represented by a set of pairwise orthogonal lines in $\R^{n+1}$.
Such a set contains at most $n+1$ elements,
and equality is attained by the lines spanned by the vectors of an orthonormal basis.
This proves the second assertion.
\end{proof}

\begin{Rem}
We summarize several basic properties of the two-number;
see \cite{CN} for the corresponding results on symmetric spaces.
\begin{itemize} 
\item 
The two-number of every compact connected Riemannian symmetric space is finite. 
\item 
For quandles, 
the two-number $\#_2$ is an isomorphism invariant. 
Moreover,
it provides an obstruction to the existence of quandle embeddings. 
Indeed, 
if there exists a quandle embedding, that is, an injective quandle homomorphism 
\[
f \colon (X,s^X)\longrightarrow (Y,s^Y),
\]
then it satisfies 
\[
\#_2(X,s^X)\leq \#_2(Y,s^Y).
\]
To see this,
observe that the image under $f$ of any antipodal subset of $X$ is an antipodal subset of $Y$. 
\item 
As an application,
for $n\geq 2$,
the symmetric-space quandle $\RP^n$ cannot be embedded into
the symmetric-space quandle $S^m$ for any $m$,
because
\[
\#_2(\RP^n)=n+1>2=\#_2(S^m).
\]
\end{itemize} 
\end{Rem}

One of the main achievements of the Chen--Nagano theory
is the establishment of relationships between the two-number $\#_2(M)$
and the topology of a compact connected Riemannian symmetric space $M$.
We recall a statement concerning the topological Euler characteristic $\chi^{\mathrm{top}}(M)$. 

\begin{Thm}[Chen--Nagano \cite{CN}]
Let $M$ be a compact connected Riemannian symmetric space. 
Then it satisfies 
\[
\#_2(M) \geq \chi^{\mathrm{top}}(M).
\]
\end{Thm}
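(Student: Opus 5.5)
The plan is to reduce both sides of the inequality to statements about fixed points of isometries, and compare them through a maximal torus. I will use the point-symmetry description of the two-number, and the fact that for a compact manifold with a torus action, $\chi^{\mathrm{top}}(M) = \chi^{\mathrm{top}}(M^T)$, where $M^T$ is the fixed-point set.

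First, write $M = G/K$ with $G$ the identity component of the isometry group. Take a maximal torus $T$ of $G$. By the classical Hopf--Samelson theorem, $\chi^{\mathrm{top}}(M) = \chi^{\mathrm{top}}(M^T)$. Moreover, $M^T$ is a finite set when $\operatorname{rank} G = \operatorname{rank} K$, and $\chi^{\mathrm{top}}(M)=0$ otherwise. In the latter case the inequality is trivial, since $\#_2(M)\ge 1$. So assume equal rank, so that $\chi^{\mathrm{top}}(M) = \#M^T$, and it suffices to show that $M^T$ is an antipodal subset of $M$.

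Second, the key step. Let $p,q \in M^T$. Since $T$ fixes $p$, it lies in the isotropy group $K_p$. Because the ranks are equal, $T$ is a maximal torus of $K_p$. The point symmetry $s_p$ acts on $G$ by conjugation $\sigma_p(g) = s_p g s_p^{-1}$, and $K_p$ is contained in the fixed-point group of $\sigma_p$, with the same identity component. Since $s_p$ centralizes $K_p^0$, it commutes with every $t \in T$. Hence for $q \in M^T$ and $t\in T$ we get $t(s_p(q)) = s_p(t q) = s_p(q)$, so $s_p(q)\in M^T$.

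It remains to show $s_p(q)=q$, and this is where I expect the main obstacle. My first approach is to use that $M^T$ is finite and that $s_p$ is an isometry normalizing $T$ in a controlled way. Concretely, $q = g p$ with $g$ in the normalizer $N_G(T)$, since the $T$-fixed points form a single orbit of the Weyl group $N_G(T)/N_K(T)$. Then $s_p(q) = \sigma_p(g) p$. The involution $\sigma_p$ acts on $N_G(T)$ and fixes $T$ pointwise, because $T\subset K_p^0$. Since the centralizer of $T$ in $G$ is $T$ itself, $\sigma_p$ induces on the Weyl group $W(G,T)$ an automorphism that is trivial on $T$, and therefore the identity on $W$. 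So $\sigma_p(g) \in gT$, and $s_p(q) = gtp = g p = q$.

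Finally, since $M^T$ is antipodal, $\#_2(M) \ge \#M^T = \chi^{\mathrm{top}}(M)$. The delicate point is verifying that $\sigma_p$ fixes $T$ pointwise and that the Weyl-group argument goes through when $G$ or $K$ is disconnected. For disconnected groups I would pass to identity components, which does not affect $M^T$ as a $T$-fixed set.
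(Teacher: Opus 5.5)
The paper does not prove this theorem; it only cites \cite{CN}. The one related ingredient it supplies is the description in Section~3 of $\chi^{\mathrm{top}}(M)$ as the number of fixed points of a maximal torus $T$ of $G$. Your argument builds on exactly that description and is correct: $M^T$ has $\chi^{\mathrm{top}}(M)$ points, and you show it is antipodal.

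The steps you flag as delicate are fine.
\begin{itemize}
\item Every $k\in K_p$, not just $K_p^0$, commutes with $s_p$. Indeed, $s_pks_p^{-1}$ and $k$ both fix $p$ and have the same differential there, since $-\mathrm{id}$ commutes with $dk_p$. Hence $\sigma_p$ is the identity on $T\subset K_p$.
\item For $g\in N_G(T)$ and $t\in T$ you then get $\sigma_p(g)\,t\,\sigma_p(g)^{-1}=\sigma_p(gtg^{-1})=gtg^{-1}$. So $g^{-1}\sigma_p(g)\in C_G(T)=T$, which needs only that $G$ is compact and connected, as it is by your choice.
\item A disconnected $K$ matters only when you write $q=gp$ with $g\in N_G(T)$. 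If $q=hp$, then $T$ and $h^{-1}Th$ are maximal tori of the compact connected group $K_p^0$. So $h^{-1}Th=kTk^{-1}$ for some $k\in K_p^0$, and $g=hk$ works.
\end{itemize}
No passage to identity components is needed, and the observation $s_p(M^T)\subset M^T$ is never used.

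Compared with the bare citation, your route gives an explicit antipodal set of cardinality exactly $\chi^{\mathrm{top}}(M)$. It also shows precisely where transitivity of $G$ and $C_G(T)=T$ enter. This is what breaks down for quandles such as Example~\ref{ex:johnson}. There $\Dis(X,s)$ preserves each component $A_{i,j}$, so it is not transitive. Its minimizing fixed-point sets have $8>\#_2(X,s)$ points and therefore cannot be antipodal.
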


We refer to this inequality as the \textit{Chen--Nagano inequality}.
Further results concerning this inequality can be found in \cite{CN}.
For example,
equality holds when $M$ is a compact Hermitian symmetric space of semisimple type.
The relationship between the two-number and the Euler characteristic
also motivates the following conjecture.

\begin{conj}[cf.\ \cite{Chen}]
Let $M$ be a compact connected Riemannian symmetric space.
Then it satisfies 
\[
\#_2(M) \equiv \chi^{\mathrm{top}}(M) \pmod{2}.
\]
\end{conj}

We refer to this conjecture as the \textit{Chen--Nagano conjecture}.
Compact connected irreducible Riemannian symmetric spaces have been classified,
and computations based on this classification suggest that the conjecture is valid.
Nevertheless,
a conceptual proof is desirable,
as it may clarify the underlying mechanism responsible for the parity agreement.

We conclude this section by verifying the Chen--Nagano conjecture
for the standard sphere $S^n$ and the real projective space $\RP^n$.
Their Euler characteristics are well known,
and their two-numbers were computed in Example~\ref{ex:0822}.
Combining these facts yields the following examples.

\begin{Ex}
For the standard sphere $S^n$, we have
\[
\#_2(S^n)=2,
\qquad
\chi^{\mathrm{top}}(S^n)
=
\begin{cases}
0 & \text{(if $n$ is odd)},\\
2 & \text{(if $n$ is even)}.
\end{cases}
\]
Thus, both $\#_2(S^n)$ and $\chi^{\mathrm{top}}(S^n)$ are even. 
Consequently, $S^n$ satisfies the Chen--Nagano conjecture. 
\end{Ex}

\begin{Ex}
For the real projective space $\RP^n$, we have
\[
\#_2(\RP^n)=n+1,
\qquad
\chi^{\mathrm{top}}(\RP^n)
=
\begin{cases}
0 & \text{(if $n$ is odd)},\\
1 & \text{(if $n$ is even)}.
\end{cases}
\] 
If $n$ is odd, then both $\#_2(\RP^n)=n+1$ and $\chi^{\mathrm{top}}(\RP^n)=0$ are even. 
If $n$ is even, then both $\#_2(\RP^n)=n+1$ and $\chi^{\mathrm{top}}(\RP^n)=1$ are odd. 
Consequently, $\RP^n$ satisfies the Chen--Nagano conjecture. 
\end{Ex}

%%%

\section{Euler characteristics for quandles} 

Our aim is to establish a quandle analogue of the Chen--Nagano theory.
Recall that the two-number has already been defined for quandles.
To formulate an analogue of the Chen--Nagano theory,
we also need an appropriate notion of the Euler characteristic for quandles.
Such a notion was introduced by the first and third authors in \cite{KT}.
In this section,
we recall its definition, basic properties, and several examples.
We first introduce the displacement group of a quandle.

\begin{Def}
The \textit{displacement group} of a quandle $(X,s)$ is defined by
\[
\Dis(X,s)
:=
\left\langle
s_x\circ s_y^{-1}
\mathrel{\Big|}
x,y\in X
\right\rangle_{\mathrm{grp}}.
\]
\end{Def}

When a compact connected Riemannian symmetric space $M$ is regarded as a quandle,
its displacement group is a compact connected Lie group
that acts transitively on $M$.
Using the action of the displacement group,
we define the Euler characteristic of a quandle as follows.

\begin{Def}[\cite{KT}]
For $g\in\Dis(X,s)$, let
\[
\Fix(g)
:=
\{x\in X\mid g(x)=x\}.
\]
The \textit{quandle Euler characteristic} of a quandle $(X,s)$ is defined by
\[
\chi^{\mathrm{qdl}}(X,s)
:=
\inf
\left\{
\#\Fix(g)
\mathrel{\Big|}
g\in\Dis(X,s)
\right\}.
\]
\end{Def}

This definition is motivated by a classical description of the Euler characteristic of a compact homogeneous space. 
More precisely,
if $G/K$ is a homogeneous space with $G$ compact and connected,
then its topological Euler characteristic can be described
in terms of the fixed-point set of the action of a maximal torus of $G$.
Using this description,
one obtains the following compatibility result.

\begin{Thm}[\cite{KT}]
Let $M$ be a compact connected Riemannian symmetric space,
regarded as a quandle via its point symmetries.
Then
\[
\chi^{\mathrm{qdl}}(M)
=
\chi^{\mathrm{top}}(M).
\]
\end{Thm}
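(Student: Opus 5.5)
We must prove $\chi^{\mathrm{qdl}}(M)=\chi^{\mathrm{top}}(M)$ for a compact connected Riemannian symmetric space $M$. The plan is to describe both sides as the number of fixed points of a generic element of a maximal torus of $G:=\Dis(M,s)$, and to show they agree.

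The first step is to identify $\Dis(M,s)$ as a compact connected Lie group $G$ acting transitively on $M$, so that $M=G/K$ with $K$ the isotropy subgroup at a base point $o$. Concretely, $G$ is generated by the products $s_x s_y^{-1}$. Since $M$ is connected, these products lie in the identity component of the isometry group; they are the transvections. The displacement group is then the transvection group, a closed connected subgroup of the isometry group that acts transitively. We take these facts as known from the structure theory of symmetric spaces, as stated before the definition.

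The second step uses the classical fact (Hopf--Samelson) that for compact connected $G$ and closed $K$,
\[
\chi^{\mathrm{top}}(G/K)=\#\Fix(T)=|W_G|/|W_K| \text{ or } 0,
\]
where $T$ is a maximal torus of $G$. More precisely, $\chi^{\mathrm{top}}=\chi(\Fix(T))$ by the localization theorem for torus actions, and $\Fix(T)$ is finite. The set $\Fix(T)$ is empty unless $\operatorname{rank}K=\operatorname{rank}G$, in which case it has $|W_G|/|W_K|$ points. We then choose a topological generator $t\in T$, that is, an element whose powers are dense in $T$. Then $\Fix(t)=\Fix(T)$, so
\[
\#\Fix(t)=\chi^{\mathrm{top}}(M),
\]
which gives $\chi^{\mathrm{qdl}}(M)\le\chi^{\mathrm{top}}(M)$.

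The third step is the reverse inequality, and this is the main obstacle: we need $\#\Fix(g)\ge\chi^{\mathrm{top}}(M)$ for every $g\in G$. Every $g$ lies in some maximal torus $T'$, which is conjugate to $T$. Moreover $\Fix(g)\supseteq\Fix(T')$, and $\#\Fix(T')=\#\Fix(T)=\chi^{\mathrm{top}}(M)$ because conjugation by $h\in G$ carries fixed sets bijectively via $x\mapsto h(x)$. Hence $\#\Fix(g)\ge\chi^{\mathrm{top}}(M)$ for all $g$. So the infimum equals $\chi^{\mathrm{top}}(M)$ and is attained by a generic torus element.

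The delicate points are the identification of $\Dis$ with a compact connected transitive group and the equality $\#\Fix(T)=\chi^{\mathrm{top}}$, including the case where $\Fix(T)$ is empty and $\chi^{\mathrm{top}}=0$. For the latter we would cite the Lefschetz fixed point argument: $t$ is homotopic to the identity, so its Lefschetz number is $\chi^{\mathrm{top}}(M)$. A generic $t$ has isolated, nondegenerate fixed points, each of index $+1$ because the torus action rotates the tangent space. This yields $\#\Fix(t)=\chi^{\mathrm{top}}(M)$.
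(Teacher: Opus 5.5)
Your proposal is correct and follows the route the paper indicates: the paper gives no proof beyond citing [KT], noting only that $\Dis(M,s)$ is a compact connected Lie group acting transitively and that $\chi^{\mathrm{top}}(G/K)$ is the number of fixed points of a maximal torus $T$, and your argument supplies both inequalities from exactly that description (a topological generator $t\in T$ with $\Fix(t)=\Fix(T)$ for $\leq$, and the maximal torus theorem with $\Fix(g)\supseteq\Fix(hTh^{-1})=h\,\Fix(T)$ for $\geq$). One harmless caveat: the formula $\#W_G/\#W_K$ is right only with $W_K:=N_K(T)/T$ when $K$ is disconnected (e.g.\ $\RP^2=\mathsf{SO}(3)/\mathsf{S}(\mathsf{O}(1)\times\mathsf{O}(2))$), but your proof never uses it, since it needs only $\#\Fix(T)=\chi^{\mathrm{top}}(M)$, which your Lefschetz-index argument justifies, including the case $\Fix(T)=\emptyset$.
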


We now describe two examples,
namely, the standard sphere $S^n$ and the real projective space $\RP^n$.
These examples illustrate that their Euler characteristics
coincide with the cardinalities of the fixed-point sets
of the actions of maximal tori.

\begin{Ex}
The standard sphere $S^n$ can be expressed as 
\[
S^n = \mathsf{SO}(n+1)/\mathsf{SO}(n) . 
\]
Let $T$ be a maximal torus of $\mathsf{SO}(n+1)$. 
Then the following statements hold: 
\begin{enumerate}
\item
If $n$ is odd, then
\[
T\cong \mathsf{SO}(2)^{(n+1)/2},
\]
and the action of $T$ on $S^n$ has no fixed points.

\item
If $n$ is even, then
\[
T\cong \mathsf{SO}(2)^{n/2},
\]
and the action of $T$ on $S^n$ has exactly two fixed points.
\end{enumerate}
In both cases, $\#\Fix(T,S^n)$, 
the cardinality of the fixed point set of the action of $T$, 
coincides with $\chi^{\mathrm{top}}(S^n)$. 
\end{Ex}

Let us describe these maximal tori more explicitly.
For $n=5$ and $n=6$, they can be realized as the following block-diagonal subgroups:
\begin{align*}
\mathsf{SO}(6)
&\supset
\left\{
\begin{pmatrix}
A_1 & & \\
& A_2 & \\
& & A_3
\end{pmatrix}
\mathrel{\Big|}
A_1,A_2,A_3\in\mathsf{SO}(2)
\right\},
\\
\mathsf{SO}(7)
&\supset
\left\{
\begin{pmatrix}
1 & & & \\
& A_1 & & \\
& & A_2 & \\
& & & A_3
\end{pmatrix}
\mathrel{\Big|}
A_1,A_2,A_3\in\mathsf{SO}(2)
\right\}.
\end{align*}
In the first case,
there is no nonzero vector fixed by the entire torus,
and hence its action on $S^5$ has no fixed points.
In the second case,
the one-dimensional subspace corresponding to the first coordinate
is fixed pointwise by the entire torus.
Its intersection with $S^6$ consists of two points,
and hence the action has exactly two fixed points.

\begin{Ex}
The real projective space $\RP^n$ can be expressed as
\[
\RP^n
=
\mathsf{SO}(n+1)/
\mathsf{S}\bigl(\mathsf{O}(1)\times\mathsf{O}(n)\bigr).
\]
A maximal torus $T$ of $\mathsf{SO}(n+1)$
can be chosen as in the preceding example.
If $n$ is odd,
then its action on $\RP^n$ has no fixed points.
If $n$ is even,
then the one-dimensional subspace fixed by $T$
determines exactly one fixed point in $\RP^n$.
Consequently $\#\Fix(T,\RP^n)$, 
the cardinality of the fixed point set of the action of $T$, 
coincides with $\chi^{\mathrm{top}}(\RP^n)$. 
\end{Ex}

We note here that the Euler characteristics of compact connected Riemannian symmetric spaces are completely determined.
We summarize some relevant results below.

\begin{Rem}
Let $M=G/K$ be a compact connected Riemannian symmetric space, 
where $G$ is the identity component of the isometry group of $M$. 
Then the following statements hold. 
\begin{enumerate}
\item
The Euler characteristic $\chi^{\mathrm{top}}(G/K)$ vanishes if and only if
\[
\mathrm{rk}(G)>\mathrm{rk}(K).
\]
Indeed, as mentioned above, 
the Euler characteristic of $G/K$ coincides with the cardinality of the fixed-point set of the action of 
a maximal torus $T$ of $G$. 
The fixed-point set is nonempty if and only if $T$ is conjugate to a subgroup of $K$, which is equivalent to
$\mathrm{rk}(G)=\mathrm{rk}(K)$. 

\item
If $\chi^{\mathrm{top}}(G/K)\neq 0$, then it can be computed in terms of the Weyl groups of $G$ and $K$; see \cite{Wang}.
More precisely,
\[
\chi^{\mathrm{top}}(G/K)
=
\frac{\#W(G)}{\#W(K)}.
\]

\item
If $\chi^{\mathrm{top}}(G/K)\neq 0$, then it can also be computed inductively by using certain totally geodesic submanifolds, called polars; see \cite{Nagano}.
\end{enumerate}
\end{Rem}

We have now introduced the definitions necessary
to formulate a quandle analogue of the Chen--Nagano theory.
In Definition~\ref{def:0822},
we introduced the following two properties of a quandle $(X,s)$:
\begin{itemize}
\item
the \textit{Chen--Nagano-type inequality}:
\[
\#_2(X,s)\geq\chi^{\mathrm{qdl}}(X,s);
\]
\item
the \textit{parity agreement}:
\[
\#_2(X,s)
\equiv
\chi^{\mathrm{qdl}}(X,s)
\pmod{2}.
\]
\end{itemize}
In the next section, 
we construct examples of quandles that satisfy these properties and examples that fail to satisfy them. 

\section{Quandles associated with weighted graphs} 

In this section, we present a method for constructing quandles from directed simple graphs 
whose edges are labeled by elements of abelian groups. 
This construction is a slight generalization of those given in \cite{FT,SS}. 

Recall that a directed simple graph is a directed graph with neither self-loops nor multiple edges.
More precisely, if $V$ is a vertex set, then the edge set $E$ satisfies
\[
E \subset \{ (v,w) \in V \times V \mid v \neq w \}.
\]

\begin{Prop}
Let $(V,E)$ be a finite directed simple graph whose vertex set is given by
\[
V=\{A_1,\ldots,A_n\},
\]
where each $A_i$ is an abelian group written additively.
Assume that each edge $(A_i,A_j)\in E$ is labeled by an element $w(i,j)\in A_j$.
If $(A_i,A_j)\notin E$, then we define $w(i,j):=0$.
Then the disjoint union
\[
X:=A_1 \sqcup \cdots \sqcup A_n
\]
admits a quandle structure defined by
\[
s_{a_i}(a_j)=a_j+w(i,j)
\qquad
(a_i\in A_i,\ a_j\in A_j).
\]
\end{Prop}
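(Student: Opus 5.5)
We verify the three axioms (Q1)--(Q3) directly from the defining formula. The key observation is that each point symmetry depends only on the component containing the point: if $a_i, a_i' \in A_i$, then $s_{a_i} = s_{a_i'}$. Moreover, $s_{a_i}$ maps each $A_j$ to itself by the translation $\tau_j^{(i)} \colon a_j \mapsto a_j + w(i,j)$. Thus we may write $s_{a_i} = \sigma_i$, where $\sigma_i$ is the bijection of $X$ acting on each component $A_j$ as translation by $w(i,j)\in A_j$.

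For (Q1), take $a_i\in A_i$. Since the graph has no self-loops, $(A_i,A_i)\notin E$, so by convention $w(i,i)=0$. Hence $s_{a_i}(a_i)=a_i+0=a_i$.

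For (Q2), the map $\sigma_i$ preserves every component and restricts on $A_j$ to translation by $w(i,j)$. This restriction is bijective with inverse translation by $-w(i,j)$. Hence $\sigma_i$ is a bijection of the disjoint union $X$.

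For (Q3), take $x=a_i\in A_i$ and $y=a_j\in A_j$. Since $s_x(y)=a_j+w(i,j)$ still lies in $A_j$, the component observation gives
\[
s_{s_x(y)}=\sigma_j=s_y.
\]
So (Q3) reduces to the commutation $\sigma_i\circ\sigma_j=\sigma_j\circ\sigma_i$. Both maps preserve every component $A_k$ and act there as translations by $w(i,k)$ and $w(j,k)$, which are elements of the same abelian group $A_k$. Therefore both composites act on $A_k$ as translation by $w(i,k)+w(j,k)$ and agree on each component, hence on $X$.

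None of these steps is a real obstacle. The only point requiring care is that (Q3) uses $s_x(y)\in A_j$, i.e.\ that point symmetries preserve components. This is what allows $s_{s_x(y)}$ to be identified with $s_y$. Commutativity is needed only within each single group $A_k$.
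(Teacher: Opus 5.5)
Your proposal is correct and follows essentially the same route as the paper: (Q1) from the absence of self-loops giving $w(i,i)=0$, (Q2) via the inverse translation by $-w(i,j)$, and (Q3) by evaluating both sides on $a_k\in A_k$ and using commutativity of $A_k$. Your explicit observation that $s_{s_x(y)}=s_y$, because point symmetries preserve components, is exactly what the paper uses implicitly when it writes $s_{a_j+w(i,j)}\bigl(a_k+w(i,k)\bigr)=a_k+w(i,k)+w(j,k)$.
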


\begin{proof}
It suffices to verify the three quandle axioms.
The first axiom (Q1) follows immediately from the fact that the graph $(V,E)$ has no self-loops. 
Hence $(A_i,A_i)\notin E$, and therefore $w(i,i)=0$ for every $i$. 
The second axiom (Q2) is also immediate, since the inverse of $s_{a_i}$ is given by 
\[
(s_{a_i})^{-1}(a_j)=a_j-w(i,j)
\qquad
(a_i\in A_i,\ a_j\in A_j).
\]
To verify the third axiom (Q3), let $a_i\in A_i$, $a_j\in A_j$, and $a_k\in A_k$.
By definition,
\begin{align*}
s_{a_i}\circ s_{a_j}(a_k)
&=
s_{a_i}\bigl(a_k+w(j,k)\bigr) \\
&=
a_k+w(j,k)+w(i,k),
\\
s_{s_{a_i}(a_j)}\circ s_{a_i}(a_k)
&=
s_{a_j+w(i,j)}\bigl(a_k+w(i,k)\bigr) \\
&=
a_k+w(i,k)+w(j,k).
\end{align*}
Since $A_k$ is abelian, the two expressions coincide.
Therefore axiom (Q3) holds, which completes the proof.
\end{proof}

Clearly, the quandles obtained by this construction have a rather special form.
Nevertheless, this method yields many interesting examples of quandles, which may serve as useful test cases and potential sources of counterexamples.
Although the abelian groups $A_i$ may be arbitrary, we shall mainly consider the case in which they are cyclic groups
\[
\Z_n := \Z / n\Z.
\]

\begin{Rem}
We make the following observations concerning the construction above.
\begin{enumerate}
\item
Every quandle obtained by this construction has an abelian inner automorphism group.
Indeed, all point symmetries commute with one another.

\item
Our construction is a slight generalization of that of Saito and Sugawara \cite{SS}.
Their construction corresponds to the special case in which all the abelian groups $A_i$ are mutually isomorphic.
This special case is sufficient for constructing homogeneous quandles with abelian inner automorphism groups.

\item
The construction of Saito and Sugawara \cite{SS} is itself a generalization of that of Furuki and the third author \cite{FT}.
The latter construction corresponds to the special case in which all the groups $A_i$ are isomorphic to $\Z_2$ and the directed graph is bidirected.
\end{enumerate}
\end{Rem}

We now consider quandles constructed from certain specific graphs and determine their two-numbers and quandle Euler characteristics.

\begin{Ex}
Consider the quandle $(X,s)$ obtained from the graph shown in Figure~\ref{fig:graph-two-vertices}. 
\begin{figure}[htbp]
\centering
\includegraphics[width=0.3\textwidth]{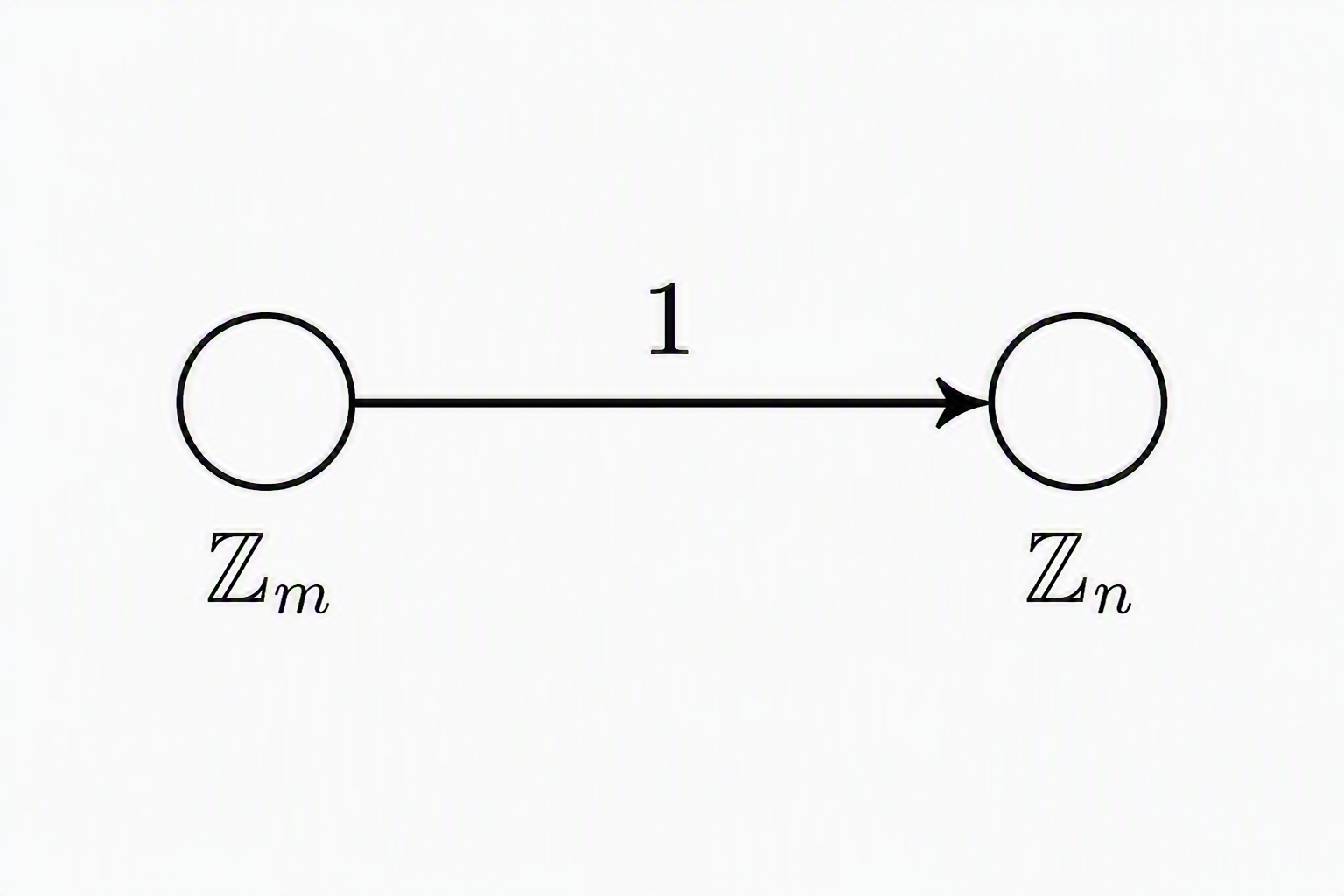}
\caption{A graph with two vertices.}
\label{fig:graph-two-vertices}
\end{figure}
Then the following hold:
\begin{enumerate}
\item
$\#_2(X,s)=\max\{m,n\}$.
\item
$\chi^{\mathrm{qdl}}(X,s)=m$.
\end{enumerate}
Consequently, this quandle satisfies the Chen--Nagano-type inequality, 
but it does not satisfy the parity agreement in general.
\end{Ex}

\begin{proof}
We have
\[
X=\Z_m\sqcup\Z_n.
\]
For every $x\in\Z_m$, the point symmetry $s_x$ acts trivially on $\Z_m$ and as translation by $1$ on $\Z_n$.
On the other hand, for every $y\in\Z_n$, the point symmetry $s_y$ is the identity map on $X$.
Therefore, a maximal antipodal subset is either $\Z_m$ or $\Z_n$, and hence
\[
\#_2(X,s)=\max\{m,n\}.
\]

Moreover, the above description of the point symmetries shows that the
displacement group is generated by the elements
\[
s_x = s_x \circ s_y^{-1} \in \Dis(X,s)
\qquad
(x \in \mathbb{Z}_m,\ y \in \mathbb{Z}_n).
\]
Each of these elements acts nontrivially only on $\mathbb{Z}_n$.
Consequently, the fixed-point set of every nonidentity element of the
displacement group is precisely $\mathbb{Z}_m$. Therefore,
\[
\chi^{\mathrm{qdl}}(X,s)=m,
\]
which proves the second assertion. 
\end{proof}

\begin{Ex}
Consider the quandle $(X,s)$ obtained from the graph shown in Figure~\ref{fig:graph-three-vertices}.
\begin{figure}[htbp]
\centering
\includegraphics[width=0.3\textwidth]{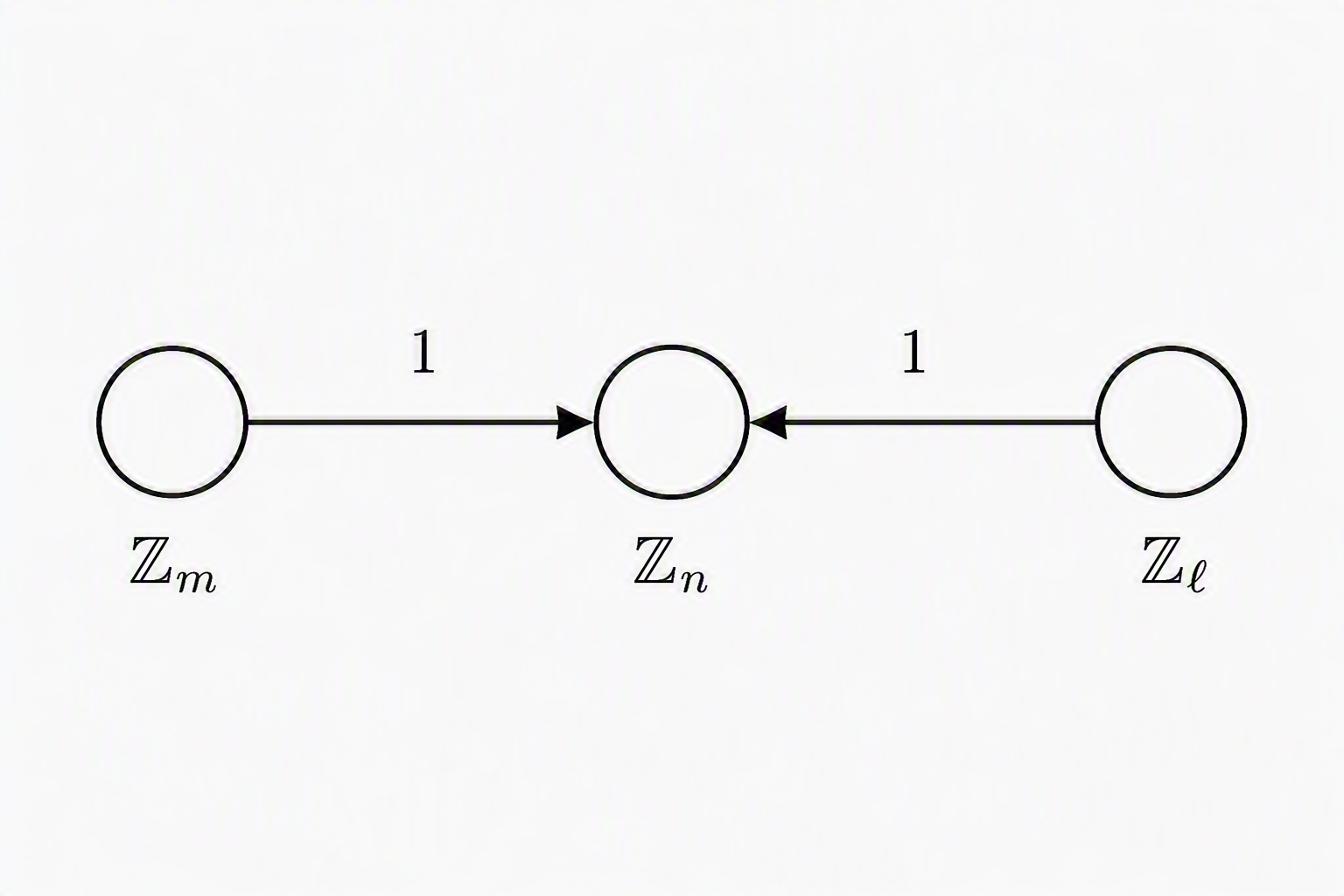}
\caption{A graph with three vertices.}
\label{fig:graph-three-vertices}
\end{figure}
Then the following hold:
\begin{enumerate}
\item
$\#_2(X,s)=\max\{ m+\ell,n \}$.
\item
$\chi^{\mathrm{qdl}}(X,s)=m+\ell$.
\end{enumerate}
Consequently, this quandle satisfies the Chen--Nagano-type inequality, 
but it does not satisfy the parity agreement in general.
\end{Ex}

\begin{proof}
The proof is similar to that of the preceding example.
We have
\[
X=\Z_m\sqcup\Z_n\sqcup\Z_\ell.
\]
For every $y\in\Z_n$, the point symmetry $s_y$ is the identity map on $X$.
On the other hand, for every $x\in\Z_m\sqcup\Z_\ell$, the point symmetry $s_x$ acts trivially on $\Z_m\sqcup\Z_\ell$ and as translation by $1$ on $\Z_n$.
Therefore, a maximal antipodal subset is either $\Z_m\sqcup\Z_\ell$ or $\Z_n$.
Hence
\[
\#_2(X,s)=\max\{m+\ell,n\}.
\]

Moreover, the displacement group acts trivially on $\Z_m\sqcup\Z_\ell$ and nontrivially only on $\Z_n$.
Thus, the fixed-point set of every nonidentity element of the displacement group is precisely
\[
\Z_m\sqcup\Z_\ell.
\]
It follows that
\[
\chi^{\mathrm{qdl}}(X,s)=m+\ell,
\]
which proves the second assertion.
\end{proof}

The preceding examples provide counterexamples to the parity agreement.
We next present counterexamples to the Chen--Nagano-type inequality using quandles constructed from 
more complicated graphs.

We first recall the notion of the line graph of a graph.
Let $G=(V,E)$ be an undirected simple graph.
The \textit{line graph} $L(G)$ is the graph whose vertex set is $E$, where two distinct vertices of $L(G)$ are adjacent if the corresponding edges of $G$ share a common endpoint.
We consider the case in which $G=K_5$, the complete graph on five vertices.
Its line graph is illustrated in Figure~\ref{fig:johnson}, 
which is also called the Johnson graph $J(5,2)$. 

\begin{figure}[htbp]
\centering
\includegraphics[width=0.4\textwidth]{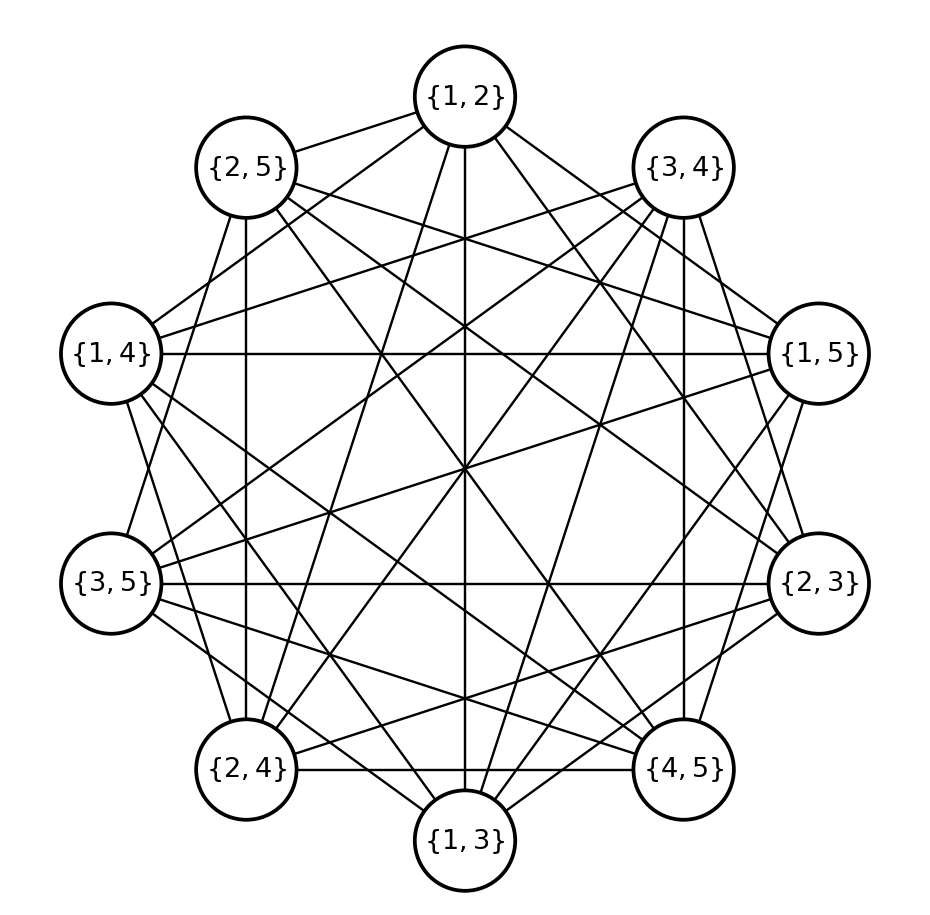}
\caption{The Johnson graph $J(5,2)$.}
\label{fig:johnson}
\end{figure}

\begin{Ex}
\label{ex:johnson} 
Let $K_5$ be the complete graph on five vertices, and let
\[
L(K_5)=(V,E)
\]
be its line graph.
We identify its vertex set with
\[
V=\{A_{i,j}\mid 1\leq i<j\leq 5\},
\]
where each $A_{i,j}$ is a copy of $\Z_2$.
We regard each edge of $L(K_5)$ as a pair of oppositely directed edges, each labeled by the nonzero element $1\in\Z_2$.
Then the quandle $(X,s)$ obtained from this labeled directed graph satisfies the following:
\begin{enumerate}
\item
$\#_2(X,s)=4$.
\item
$\chi^{\mathrm{qdl}}(X,s)=8$.
\end{enumerate}
Consequently, this quandle does not satisfy the Chen--Nagano-type inequality, but it does satisfy the parity agreement.
\end{Ex}

\begin{proof}
First, recall that the quandle $(X,s)$ is homogeneous; that is, its automorphism group acts transitively on $X$.
This follows from the vertex-transitivity of $L(K_5)$; see \cite{FT}.

We first prove~(1).
We claim that every maximal antipodal subset of $(X,s)$ is of the form
\[
A_{i,j}\sqcup A_{k,\ell},
\]
where $i,j,k,\ell$ are pairwise distinct.
Let $A$ be a maximal antipodal subset of $(X,s)$.
By homogeneity, we may assume without loss of generality that $A$ contains an element
\[
x\in A_{1,2}\cong\Z_2.
\]
Since $A$ is antipodal, every element of $A$ is fixed by $s_x$, and hence we have
\[
A\subset\Fix(s_x,X)
 =A_{1,2}\sqcup A_{3,4}\sqcup A_{3,5}\sqcup A_{4,5}.
\]
No two of the three components $A_{3,4}$, $A_{3,5}$, $A_{4,5}$ 
can be contained in the same antipodal subset, because the corresponding vertices of $L(K_5)$ are adjacent.
Consequently, $A$ contains at most one of these three components.
Since $A$ is maximal antipodal, it follows that
\[
A
 =A_{1,2}\sqcup A_{3,4},
 \qquad
A_{1,2}\sqcup A_{3,5},
 \qquad\text{or}\qquad
A_{1,2}\sqcup A_{4,5}.
\]
This proves the claim. 
In each case,
\[
\#A=\#\bigl(\Z_2\sqcup\Z_2\bigr)=4.
\]
Therefore,
\[
\#_2(X,s)=4.
\]

We next prove~(2).
For each $(a_1,\ldots,a_5) \in (\Z_2)^5$, define a map
\[
(a_1,\ldots,a_5) \colon X \longrightarrow X
\]
by
\[
(a_1,\ldots,a_5)(y) = y + a_i + a_j \in A_{i,j} 
\qquad 
(\mbox{if $y \in A_{i,j}$}) . 
\]
Since $A_{i,j}$ is isomorphic to $\Z_2$, the map
$(a_1,\ldots,a_5)$ fixes an element $y \in A_{i,j}$ if and only if
$a_i = a_j$.
This notation provides a convenient description of the point symmetries of $(X,s)$.
For example, if $x_{12} \in A_{1,2}$, then 
\[
s_{x_{12}} = (0,0,1,1,1),
\]
because both maps act trivially on
$A_{1,2} \sqcup A_{3,4} \sqcup A_{3,5} \sqcup A_{4,5}$ 
and act as translation by $1$ on each of the remaining components.
More generally, if $x_{ij} \in A_{i,j}$, then
\[
s_{x_{ij}} = (a_1,a_2,a_3,a_4,a_5),
\]
where $a_i = a_j = 0$ and $a_k = 1$ for every
$k \notin \{i,j\}$.
Indeed, $s_{x_{ij}}$ acts as translation by $1$ on each of the
components $A_{i,k}$ and $A_{j,k}$, where $k \notin \{i,j\}$,
and acts trivially on all other components. 
With this notation, the composition of two such maps corresponds to 
coordinatewise addition in $(\Z_2)^5$:
\[
(a_1,\ldots,a_5) \circ (b_1,\ldots,b_5)
=
(a_1+b_1,\ldots,a_5+b_5).
\]
In particular, each such map is an involution.

Using the above notions, we will determine the Euler characteristic. 
Let $x_{12}\in A_{1,2}$ and $x_{13}\in A_{1,3}$.
Since $s_{x_{13}}^{-1}=s_{x_{13}}$, the displacement group $\Dis(X,s)$ contains the element
\begin{align*}
s_{x_{12}}\circ s_{x_{13}}^{-1} =(0,0,1,1,1)\circ(0,1,0,1,1) =(0,1,1,0,0) . 
\end{align*}
This element acts trivially precisely on
\[
A_{1,4}\sqcup A_{1,5}\sqcup A_{2,3}\sqcup A_{4,5}.
\]
Therefore,
\begin{align*}
\chi^{\mathrm{qdl}}(X,s)
&\leq
\#\Fix\bigl(s_{x_{12}}\circ s_{x_{13}}^{-1},X\bigr)\\
&=
\#\bigl(
A_{1,4}\sqcup A_{1,5}\sqcup A_{2,3}\sqcup A_{4,5}
\bigr)\\
&=8.
\end{align*}

It remains to prove the reverse inequality.
Every element of $\Dis(X,s)$ can be represented by a vector
\[
(a_1,\ldots,a_5)\in(\Z_2)^5.
\]
Since the displacement group is generated by products of two point symmetries and every point symmetry is represented by a vector of odd weight, each element of $\Dis(X,s)$ has a representative of even weight.
Here, the weight of $(a_1,\ldots,a_5)$ is defined by
\[
\mathrm{wt}(a_1,\ldots,a_5)
:=\#\{i\mid a_i=1\}.
\]
It therefore suffices to consider representatives of weights $0$, $2$, and $4$.

If the weight is $0$, then the map is the identity, which may be excluded from consideration.

Suppose that the weight is $2$.
Then the situation is exactly similar to the case of $s_{x_{12}}\circ s_{x_{13}}^{-1}$. 
In this case the map acts as identity on $4$ components, and therefore fixes $8$ points. 

Suppose that the weight is $4$.
In this case, the map acts trivially on $6$ components and therefore has $12$ fixed points. 
For example, $(1,1,1,1,0)$ fixes pointwise the six components 
\[
A_{1,2},\quad
A_{1,3},\quad
A_{1,4},\quad
A_{2,3},\quad
A_{2,4},\quad
A_{3,4}.
\]

It follows that every nonidentity element of $\Dis(X,s)$ has at least eight fixed points.
Hence
\[
\chi^{\mathrm{qdl}}(X,s)\geq 8.
\]
Combining this with the opposite inequality obtained above, we complete the proof. 
\end{proof}

\begin{Rem}
The quandle in Example~\ref{ex:johnson} also appears in \cite{FT}.
Note that its underlying graph is bidirected and that all the associated
abelian groups are isomorphic to $\Z_2$.
As shown in \cite{FT}, this quandle can be realized as a subquandle of
the oriented real Grassmannian manifold $G_2(\R^5)^\sim$, 
consisting of oriented two-dimensional subspaces of $\R^5$. 
\end{Rem}

Finally, we conclude by posing several problems for future research related to the direction of this study.

\begin{Problem} \noindent 
\begin{enumerate} 
\item 
For each compact connected Riemannian symmetric space $M$, 
does there exists a finite subquandle $X$ of $M$ with the two-number and the same Euler characteristic? 
If exists, what properties such finite quandle satisfy? 
\item 
Find a condition for a finite quandle $(X,s)$ to satisfy the Chen-Nagano inequality 
$\#_2 (X,s) \geq \chi^{\mathrm{qdl}} (X,s)$. 
\item
Find a condition for a finite quandle $(X,s)$ to satisfy the parity agreement 
$\#_2 (X,s) \equiv \chi^{\mathrm{qdl}} (X,s)$ ($\mathrm{mod}\, 2$). 
\end{enumerate} 
\end{Problem}

It would be inetresting to develop a theory of finite quandles from its own. 
Moreover, it would be more challenging to connect the theories of symmetric spaces and quandles, 
for example, proving the properties or conjectures for symmetric spaces using theory of finite quandles. 

\section*{Acknowledgements}

The authors would like to express their sincere gratitude to Professors Ali Baklouti 
and Hideyuki Ishi, the organizers of the 8th Tunisian--Japanese Conference, 
for providing the opportunity for fruitful discussions. 
This paper grew out of discussions held during the conference. 

This work was partly supported by 
MEXT Promotion of Distinctive Joint Research Center Program JPMXP0723833165 
and Osaka Metropolitan University Strategic Research Promotion Project 
(Development of International Research Hubs). 
The second author was supported by JSPS KAKENHI Grant Number JP22K13919. 
The third author was supported by JSPS KAKENHI Grant Numbers JP23K22395 and JP24K21193.

\end{document}